%
%
%

\documentclass[graybox]{svmult}


\usepackage{type1cm}        
%
\usepackage{makeidx}         
\usepackage{graphicx}        
\usepackage{multicol}        
\usepackage[bottom]{footmisc}

\usepackage{newtxtext}       %
\usepackage[varvw]{newtxmath}       


\makeindex             
\usepackage{booktabs}
\newcommand{\R}{\mathbb{R}}

\DeclareMathOperator{\diag}{diag}

\begin{document}

\title*{Inexact Gauss-Seidel Coarse Solvers\\ for AMG and s-step CG}
\author{Stephen Thomas\orcidID{0000-0001-8007-0057} and\\ Pasqua D'Ambra\orcidID{0000-0003-2047-4986}}
\institute{Stephen Thomas \at Lehigh University, 27 Memorial Drive West, Bethlehem, PA 18015 USA, \email{sjt223@lehigh.com}
\and Pasqua D'Ambra \at Institute for Applied Computing of the National Research Council of Italy, Via P. Castellino, 111, 80131, Naples, IT \email{pasqua.dambra@cnr.it}}
%
%
\maketitle

\abstract{Communication-avoiding Krylov methods require solving small dense Gram systems at each outer iteration. We present a low-synchronization approach based on Forward Gauss--Seidel (FGS), which exploits the structure of Gram matrices arising from Chebyshev polynomial bases. We show that a single FGS sweep is mathematically equivalent to Modified Gram--Schmidt (MGS) orthogonalization in the $A$-norm and provide corresponding backward error bounds. For weak scaling on AMD MI-series GPUs, we demonstrate that $\nu = 20$--$30$ FGS iterations preserve scalability up to 64 GPUs with problem sizes exceeding 700 million unknowns. We further extend this approach to Algebraic MultiGrid (AMG) coarse-grid solves, removing the need to assemble or factor dense coarse operators.}

\section{Introduction}
\label{intro}

Sparse iterative solvers for large-scale partial differential equations encounter a fundamental exascale bottleneck due to the latency of global synchronizations. In Krylov methods such as Conjugate Gradient (CG), each iteration requires at least two global reductions to compute inner products; at processor counts reaching hundreds of thousands, synchronization costs can dominate the overall runtime, particularly under weak scaling. The $s$-step, or communication-avoiding, formulation of CG mitigates this limitation by generating $s$ Krylov basis vectors per outer iteration, thereby reducing synchronization frequency by a factor of~$s$. This reduction is especially critical for domain decomposition preconditioners, where the boundary-to-interior ratio deteriorates with increasing processor counts, amplifying synchronization overhead.

The $s$-step methodology originates with Chronopoulos and Gear~\cite{chronopoulos1989sstep,chronopoulos1989pcg}, who observed that several matrix--vector products could be performed prior to synchronization. Subsequent works~\cite{hoemmen2010communication,demmel2012communication} developed this idea within the broader framework of communication-avoiding algorithms. A central challenge of the approach is that each outer iteration requires solving the dense Gram system:
\[
G\alpha = P^{T} r, \qquad G = P^{T} A P \in \mathbb{R}^{s \times s},
\]
for which standard methods, such as direct Cholesky factorization ($O(s^{3})$) or iterative schemes such as LSQR, become increasingly burdensome as $s$ grows.

The principal contribution of this work is to demonstrate that Gram matrices arising from Chebyshev polynomial bases admit highly efficient solution via Forward Gauss--Seidel (FGS). A moderate number of sweeps, typically $\nu = 20$--$30$, suffices for convergence of the outer iteration, reducing complexity from $O(s^{3})$ to $O(\nu s^{2})$ while improving numerical stability and enabling efficient GPU implementations.

Our analysis distinguishes \emph{conditioning} from \emph{decay}. Although the bound $\kappa(G) = O(s^{2})$ (Philippe--Reichel~\cite{philippe2012orthogonal}) governs worst-case behavior, the structural estimate $\|L\|_{F} = O(\sqrt{s})$ explains the effectiveness of FGS: because $r^{(1)} = -L^{T}\alpha^{(1)}$, convergence depends on $\|L\|$ rather than $\kappa(G)$, clarifying why $\nu = 20$--$30$ sweeps suffice despite the conditioning barrier. We further establish the algebraic equivalence between FGS and Modified Gram--Schmidt (MGS) orthogonalization and extend the framework to AMG coarse-grid streaming.

The chapter is organized as follows. Section~\ref{prel} introduces notation and preliminaries of the $s$-step CG framework. Section~\ref{FGSanalysis} develops the FGS iteration and its stability properties. Section~\ref{chebconditioning} examines the conditioning of Chebyshev-based Gram matrices. Section~\ref{mgs-fgseq} establishes the algebraic equivalence between FGS and MGS. Section~\ref{coarse} extends the methodology to AMG coarse-grid solves, while Section~\ref{inexacteq} analyzes the effect of inexact Gram solves on $s$-step CG convergence. Numerical results appear in Section~\ref{results}, and concluding remarks in Section~\ref{concl}.

\section{Preliminaries}
\label{prel}

We consider solving the SPD linear system $Ax = b$, where $A \in \R^{n \times n}$ is sparse. The $s$-step CG method operates on $\mathcal{K}_s(M^{-1}A,r^{(k)})$ with preconditioner $M$ and residual $r^{(k)}$, forming a basis $P\in\R^{n\times s}$, solving the Gram system $G\alpha = P^T r^{(k)}$, $G=P^TAP$, and updating $x^{(k+1)}=x^{(k)}+P\alpha$. For monomial bases, $\kappa(G) = O(\kappa(M^{-1}A)^{\,s-1})$~\cite{Gautschi1979}, which typically limits $s$ to $s \le 4$.

\begin{definition}[Chebyshev Polynomial Basis]
Let $[\lambda_{\min},\lambda_{\max}]$ contain the spectrum of $M^{-1}A$. 
The basis is generated by the three-term recurrence
\[
p_0 = r^{(k)}, \qquad
p_1 = \theta (M^{-1}A - \sigma I) p_0, \qquad
p_{j+1} = 2\theta (M^{-1}A - \sigma I)p_j - p_{j-1},
\]
for $j \ge 1$, with $\theta = 2/(\lambda_{\max}-\lambda_{\min})$ 
and $\sigma = (\lambda_{\max}+\lambda_{\min})/2$~\cite{hoemmen2010communication}.
\end{definition}

The spectral map sends $[\lambda_{\min},\lambda_{\max}]$ to $[-1,1]$, where Chebyshev polynomials minimize the maximum deviation from zero, yielding near $A$-orthogonality.

\textbf{Column scaling.} Following Ruhe~\cite{ruhe1983stability}, set 
$d_i = (p_i^T A p_i)^{-1/2}$, and let $S = \diag(d_i)$ so that 
$\tilde{P} = P S$ satisfies $\tilde{p}_i^T A \tilde{p}_i = 1$. 
Redefining the Gram matrix as $G = \tilde{P}^T A \tilde{P}$, we then have 
$\diag(G) = I$. Writing $G = I + L + L^T$, where $L$ is strictly lower triangular, 
the FGS iteration
$(I + L)\,\alpha^{(\ell+1)} = \tilde{P}^T r^{(k)} - L^T \alpha^{(\ell)}, 
\; \alpha^{(0)} = 0,$
requires $O(s^2)$ work per sweep.

\section{FGS Iteration Analysis}
\label{FGSanalysis}

The following results characterize a single FGS sweep. 
Although some statements are given for the first iteration with $\alpha^{(0)}=0$, 
they extend to later sweeps by replacing $P^T r$ with the current right-hand side.

\begin{proposition}[FGS Residual Structure]
Let $\alpha^{(\nu+1)}$ satisfy $(D+L)\alpha^{(\nu+1)} = P^T r - L^T\alpha^{(\nu)}$. Then $
r^{(\nu+1)} = P^T r - G\alpha^{(\nu+1)} 
= -L^T\alpha^{(\nu+1)} + L^T\alpha^{(\nu)},$
and for $\alpha^{(0)}=0$, $r^{(1)}=-L^T\alpha^{(1)}$.
\end{proposition}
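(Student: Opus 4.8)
The plan is to proceed by a direct algebraic substitution that exploits the additive splitting of the Gram matrix into its lower-triangular part and its strictly upper part. Write $G = (D+L) + L^{T}$, where $D = \diag(G)$ and $L$ is strictly lower triangular, so that $D+L$ is precisely the operator appearing on the left-hand side of the FGS update. Starting from the definition $r^{(\nu+1)} = P^{T} r - G\alpha^{(\nu+1)}$, insert this splitting to obtain
\[
r^{(\nu+1)} = \bigl(P^{T} r - (D+L)\alpha^{(\nu+1)}\bigr) - L^{T}\alpha^{(\nu+1)}.
\]

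Next I would use the FGS recurrence $(D+L)\alpha^{(\nu+1)} = P^{T} r - L^{T}\alpha^{(\nu)}$ to evaluate the parenthesized term: rearranging it gives $P^{T} r - (D+L)\alpha^{(\nu+1)} = L^{T}\alpha^{(\nu)}$. Substituting this back yields $r^{(\nu+1)} = L^{T}\alpha^{(\nu)} - L^{T}\alpha^{(\nu+1)} = -L^{T}\alpha^{(\nu+1)} + L^{T}\alpha^{(\nu)}$, which is the claimed identity. The special case follows by setting $\nu = 0$: with $\alpha^{(0)} = 0$ the second term vanishes and $r^{(1)} = -L^{T}\alpha^{(1)}$.

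There is essentially no analytic obstacle here — the statement is a purely algebraic rearrangement of the iteration's defining equation — so the only thing that requires care is the bookkeeping: keeping the roles of $D+L$ and $L^{T}$ straight, and noting that nothing in the argument relies on the column scaling $D = I$, so the identity holds verbatim for the unscaled Gram matrix as well. It is also worth remarking that the same manipulation, applied with $P^{T} r$ replaced by the current right-hand side, produces the analogous residual formula for any sweep within a longer FGS sequence, in line with the remark preceding the proposition.
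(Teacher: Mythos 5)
Your proposal is correct and follows essentially the same route as the paper: both split $G = (D+L) + L^{T}$, substitute the FGS defining equation $(D+L)\alpha^{(\nu+1)} = P^{T}r - L^{T}\alpha^{(\nu)}$, and subtract from $P^{T}r$ to read off the residual. The remarks on column scaling and later sweeps are accurate but not needed for the claim.
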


\begin{proof}
Because $G=D+L+L^T$,
\[
G\alpha^{(\nu+1)} = (D+L)\alpha^{(\nu+1)} + L^T\alpha^{(\nu+1)} 
= (P^T r - L^T\alpha^{(\nu)}) + L^T\alpha^{(\nu+1)},
\]
and subtracting from $P^T r$ gives the claim.
\end{proof}

This structure shows that FGS convergence depends primarily on the strictly upper-triangular part $L^T$, and by Theorem~\ref{thm:cheb}, $\|L\|_F = O(\sqrt{s})$ for Chebyshev bases.

\begin{theorem}[Backward Stability]
For any FGS sweep, the computed $\tilde{\alpha}^{(\nu+1)}$ satisfies
\[
(D+L)\tilde{\alpha}^{(\nu+1)} = P^T r - L^T\alpha^{(\nu)} + \delta r,
\]
with
\[
\|\delta r\| \le \epsilon_{\text{mach}}\, C(s)\!\left(\|P^T r\| + \|D+L\|\|\tilde{\alpha}^{(\nu+1)}\|\right), 
\; \; C(s)=O(s)\cite{higham2002accuracy}.
\]
\end{theorem}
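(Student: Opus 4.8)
The plan is to obtain the backward error bound by treating one FGS sweep as the solution of a triangular linear system, because with $\alpha^{(\nu)}$ fixed the right-hand side $b := P^T r - L^T\alpha^{(\nu)}$ is a known vector, and the sweep computes $\tilde\alpha^{(\nu+1)}$ by forward substitution against the unit lower-triangular factor $D+L$ (here $D=I$ after the Ruhe scaling, though the statement keeps $D$ for generality). The standard componentwise backward error analysis of substitution (Higham, \emph{Accuracy and Stability of Numerical Algorithms}, Ch.~8) gives $(D+L+\Delta T)\tilde\alpha^{(\nu+1)} = b$ with $|\Delta T| \le \gamma_s |D+L|$ componentwise, where $\gamma_s = s\,\epsilon_{\text{mach}}/(1 - s\,\epsilon_{\text{mach}}) = O(s\,\epsilon_{\text{mach}})$. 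Moving the perturbation to the right-hand side, $(D+L)\tilde\alpha^{(\nu+1)} = b - \Delta T\,\tilde\alpha^{(\nu+1)}$, so $\delta r = -\Delta T\,\tilde\alpha^{(\nu+1)}$ and $\|\delta r\| \le \gamma_s\,\||D+L|\|\,\|\tilde\alpha^{(\nu+1)}\| = O(s\,\epsilon_{\text{mach}})\,\|D+L\|\,\|\tilde\alpha^{(\nu+1)}\|$ in any absolutely consistent norm.

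The second ingredient is to account for the error in forming the right-hand side $b$ itself. The vector $L^T\alpha^{(\nu)}$ is computed by a matrix--vector product, and then subtracted from $P^T r$; by the standard inner-product error bound each entry incurs a relative perturbation of size $O(s\,\epsilon_{\text{mach}})$, so the computed $\hat b$ satisfies $\hat b = P^T r - L^T\alpha^{(\nu)} + e$ with $\|e\| \le \epsilon_{\text{mach}}\,O(s)\,(\|P^T r\| + \|L^T\|\,\|\alpha^{(\nu)}\|)$, and $\|L^T\| \le \|D+L\|$ since $D$ is diagonal and $L$ strictly lower triangular. (Alternatively one can absorb $\|L^T\|\|\alpha^{(\nu)}\|$ into the $\|D+L\|\|\tilde\alpha^{(\nu+1)}\|$ term up to the convergence of the sweep; this is the cleanest route if one does not want an $\|\alpha^{(\nu)}\|$ on the right.) Combining $e$ with $\delta r$ from the substitution step and collecting constants yields exactly the stated bound with $C(s)=O(s)$, the $s$-dependence coming from the length-$s$ inner products and length-$s$ substitution recurrence.

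Concretely, the steps in order are: (i) fix $\alpha^{(\nu)}$ and identify the sweep as forward substitution with coefficient matrix $D+L$; (ii) quote the triangular-solve backward error result to get $(D+L+\Delta T)\tilde\alpha^{(\nu+1)} = \hat b$, $|\Delta T|\le\gamma_s|D+L|$; (iii) bound the error $e$ in forming $\hat b$ from $P^T r$ and $L^T\alpha^{(\nu)}$ by the inner-product/mat-vec error bound; (iv) set $\delta r := e - \Delta T\,\tilde\alpha^{(\nu+1)}$, rearrange to the asserted identity, and apply the triangle inequality together with $\gamma_s = O(s\,\epsilon_{\text{mach}})$ and $\|L^T\|\le\|D+L\|$ to read off $\|\delta r\| \le \epsilon_{\text{mach}}\,C(s)(\|P^T r\| + \|D+L\|\,\|\tilde\alpha^{(\nu+1)}\|)$.

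The main obstacle is not any single estimate but the bookkeeping of which quantities appear on the right-hand side: the raw substitution analysis naturally produces $\|\alpha^{(\nu)}\|$ and $\|\tilde\alpha^{(\nu+1)}\|$ separately, whereas the theorem states the bound purely in terms of $\|\tilde\alpha^{(\nu+1)}\|$. Closing that gap requires either (a) the observation that once the sweep has essentially converged $\|\alpha^{(\nu)}\|\approx\|\tilde\alpha^{(\nu+1)}\|$ up to lower-order terms, so the two are interchangeable in an $O(\epsilon_{\text{mach}})$ bound, or (b) re-deriving the bound so that the $L^T\alpha^{(\nu)}$ contribution is folded into the residual before perturbation analysis — note from the FGS Residual Structure proposition that $P^T r - L^T\alpha^{(\nu)} = D\alpha^{(\nu+1)} + L\alpha^{(\nu+1)}$ exactly, which lets one express the exact right-hand side in terms of $\alpha^{(\nu+1)}$ and thereby unify the two terms. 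Everything else is a routine application of the classical floating-point error bounds, and the $C(s)=O(s)$ claim is exactly what Higham's substitution and inner-product lemmas deliver.
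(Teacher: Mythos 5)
Your proposal is correct and follows essentially the same route as the paper: identify the sweep as forward substitution with coefficient matrix $D+L$, invoke the standard componentwise rounding-error bound for the substitution recurrence (the paper derives the $|\epsilon_j|\le (j-1)\epsilon_{\text{mach}}(\cdot)$ bound inline rather than quoting Higham's $\gamma_s$ form), and pass to norms. Your additional bookkeeping --- separately bounding the error in forming $P^T r - L^T\alpha^{(\nu)}$ and noting that the resulting $\|\alpha^{(\nu)}\|$ term must be folded into $\|\tilde{\alpha}^{(\nu+1)}\|$ via the exact identity $P^T r - L^T\alpha^{(\nu)} = (D+L)\alpha^{(\nu+1)}$ --- is a point the paper's one-line ``summing over $j$'' step glosses over, so your version is, if anything, the more complete one.
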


\begin{proof}
Forward substitution for $(D+L)\alpha=b$ computes
\[
\alpha_j = b_j - \sum_{i<j} L_{ji}\alpha_i.
\]
Floating-point evaluation yields $\tilde{\alpha}_j = b_j - \sum_{i<j} L_{ji}\tilde{\alpha}_i + \epsilon_j$ with 
$|\epsilon_j|\le (j-1)\epsilon_{\text{mach}}(|b_j|+\sum_{i<j}|L_{ji}||\tilde{\alpha}_i|)$; 
summing over $j$ and using norm inequalities gives the stated bound.
\end{proof}

For $s\le 20$ in double precision, $C(s)\epsilon_{\text{mach}} \lesssim 10^{-14}$, 
negligible relative to algorithmic error.

\begin{corollary}[Error Bound]
Let $\delta_1=\|r^{(1)}\|/\|P^T r\|$. Then
$\delta_1 \lesssim \|L\|\,\kappa(G),$
where $\kappa(G)=\|G\|\,\|G^{-1}\|$ and the hidden constant reflects the
closeness of $G$ and $D+L$.
\end{corollary}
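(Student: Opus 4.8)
The plan is to read the residual directly off the FGS Residual Structure proposition and then absorb the discrepancy between $G$ and its triangular part $D+L$ into the hidden constant. With $\alpha^{(0)}=0$ the first sweep gives $\alpha^{(1)}=(D+L)^{-1}P^T r$ and $r^{(1)}=-L^T\alpha^{(1)}$, so $\norm{r^{(1)}}\le\norm{L}\,\norm{\alpha^{(1)}}$ and it remains to bound $\norm{\alpha^{(1)}}$ in terms of $\norm{P^T r}$.

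To surface $\kappa(G)$ I would compare $\alpha^{(1)}$ with the exact solution $\hat\alpha=G^{-1}P^T r$. Since $G=D+L+L^T$ gives $(D+L)\hat\alpha=P^T r-L^T\hat\alpha$, subtracting the first sweep equation $(D+L)\alpha^{(1)}=P^T r$ yields the one-step identity $\alpha^{(1)}-\hat\alpha=(D+L)^{-1}L^T\hat\alpha$, hence $\norm{\alpha^{(1)}}\le\bigl(1+\norm{(D+L)^{-1}}\,\norm{L}\bigr)\norm{G^{-1}}\,\norm{P^T r}$. Because column scaling makes $\diag(G)=I$ we have $\norm{G}\ge 1$ and therefore $\norm{G^{-1}}\le\norm{G}\,\norm{G^{-1}}=\kappa(G)$, so
\[
\delta_1\ \le\ \norm{L}\,\kappa(G)\,\bigl(1+\norm{(D+L)^{-1}}\,\norm{L}\bigr),
\]
which is exactly the asserted bound, with the parenthesised factor as the hidden constant; it tends to $1$ as $\norm{L}\to 0$, i.e.\ as $D+L\to G$, matching the stated interpretation.

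The one genuine obstacle is certifying that $\norm{(D+L)^{-1}}$, and hence the hidden constant, is truly $O(1)$ rather than itself of order $\kappa(G)$: the naive route $(D+L)^{-1}=(I-G^{-1}L^T)^{-1}G^{-1}$ needs $\norm{G^{-1}}\,\norm{L}<1$, which may fail in the worst case $\kappa(G)=O(s^{2})$, $\norm{L}_F=O(\sqrt s)$. I would instead use an energy estimate: for $w=(D+L)^{-1}v$ with $\norm{v}=1$, the relation $Lw=v-Dw$ gives $w^{T}Gw+w^{T}Dw=2\,v^{T}w\le 2\norm{w}$, so $\norm{(D+L)^{-1}}\le 2/(\lambda_{\min}(G)+\min_i G_{ii})\le 2$, using that $G$ is SPD with unit diagonal. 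This is unconditional and even sharpens the corollary to $\delta_1\le(1+2\norm{L})\norm{L}$; I would nevertheless retain the $\kappa(G)$ form in the statement, since that is the quantity invoked later in Section~\ref{inexacteq} and it makes the ``conditioning versus decay'' contrast explicit, while the genuinely small $\norm{L}$ for Chebyshev bases (Theorem~\ref{thm:cheb}) is what makes these estimates useful in practice.
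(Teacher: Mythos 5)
The paper states this corollary without proof, so there is nothing to compare line by line; your argument supplies the missing derivation and does so correctly. Your first two paragraphs reconstruct what the statement evidently intends: read $r^{(1)}=-L^T\alpha^{(1)}$ off the Residual Structure proposition, relate $\alpha^{(1)}=(D+L)^{-1}P^Tr$ to $\hat\alpha=G^{-1}P^Tr$ via $(D+L)(\alpha^{(1)}-\hat\alpha)=L^T\hat\alpha$, and use $\norm{G}\ge 1$ (unit diagonal) to replace $\norm{G^{-1}}$ by $\kappa(G)$; this yields exactly $\delta_1\le\norm{L}\,\kappa(G)\bigl(1+\norm{(D+L)^{-1}}\norm{L}\bigr)$, with the bracketed factor playing the role of the paper's ``closeness of $G$ and $D+L$.'' Where you genuinely go beyond the paper is in the last paragraph: you correctly identify that the Neumann-series route to controlling $\norm{(D+L)^{-1}}$ needs $\norm{G^{-1}}\norm{L}<1$, which is not guaranteed in the regime $\kappa(G)=O(s^2)$, $\norm{L}_F=O(\sqrt{s})$, and you replace it with the energy identity $w^TGw+w^TDw=2v^Tw$ for $(D+L)w=v$, giving the unconditional bound $\norm{(D+L)^{-1}}\le 2/(1+\lambda_{\min}(G))\le 2$ for SPD $G$ with unit diagonal. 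That estimate is correct and in fact proves the stronger, $\kappa$-free bound $\delta_1\le 2\norm{L}$ directly from $\norm{L^T(D+L)^{-1}}\le 2\norm{L}$, which implies the corollary a fortiori (since $\kappa(G)\ge 1$) and dovetails with the paper's ``conditioning versus decay'' message better than the stated form does. Two small cleanups: the ``sharpened'' bound you quote as $(1+2\norm{L})\norm{L}$ still carries a factor $\norm{G^{-1}}$ if derived via the $\hat\alpha$ detour (the direct route gives the cleaner $2\norm{L}$), and the factor $1+\norm{(D+L)^{-1}}\norm{L}$ is $O(\sqrt{s})$ rather than $O(1)$, so it should be flagged as part of the $\lesssim$ rather than called a constant.
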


\begin{lemma}[Multi-Sweep Analysis]
Let $T = -(D+L)^{-1}L^T$ be the iteration matrix and set $\rho = \|T\|_2$. 
After $\nu$ sweeps, 
\[
\|r^{(\nu)}\| \le \rho^{\nu}\|P^T r\|,
\]
and geometric convergence holds when $\rho<1$.
\end{lemma}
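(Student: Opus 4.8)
The plan is to iterate the single-sweep residual identity from the FGS Residual Structure proposition and show that each sweep contracts the residual by a factor $\rho = \|T\|_2$. First I would set up the notation: writing $b = P^T r$ for the fixed right-hand side, the FGS update $(D+L)\alpha^{(\ell+1)} = b - L^T\alpha^{(\ell)}$ can be rearranged as the stationary iteration $\alpha^{(\ell+1)} = (D+L)^{-1}b - (D+L)^{-1}L^T\alpha^{(\ell)} = (D+L)^{-1}b + T\alpha^{(\ell)}$. The fixed point $\alpha^\star$ of this iteration satisfies $(D+L+L^T)\alpha^\star = b$, i.e. $G\alpha^\star = b$, which is exactly the Gram system we want to solve, so the true residual $b - G\alpha^{(\ell)}$ is the natural quantity to track.

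The key step is to relate the residual $r^{(\ell)} := b - G\alpha^{(\ell)}$ across consecutive sweeps. From the FGS Residual Structure proposition, $r^{(\ell+1)} = -L^T\alpha^{(\ell+1)} + L^T\alpha^{(\ell)} = -L^T(\alpha^{(\ell+1)} - \alpha^{(\ell)})$. On the other hand, subtracting consecutive update equations gives $(D+L)(\alpha^{(\ell+1)} - \alpha^{(\ell)}) = -L^T(\alpha^{(\ell)} - \alpha^{(\ell-1)})$, hence $\alpha^{(\ell+1)} - \alpha^{(\ell)} = T(\alpha^{(\ell)} - \alpha^{(\ell-1)})$. Combining, $r^{(\ell+1)} = -L^T(\alpha^{(\ell+1)} - \alpha^{(\ell)})$ and $r^{(\ell)} = -L^T(\alpha^{(\ell)} - \alpha^{(\ell-1)})$, and since $\alpha^{(\ell+1)} - \alpha^{(\ell)} = T(\alpha^{(\ell)} - \alpha^{(\ell-1)})$, a clean way to close the recursion is to observe that the increment vectors $e^{(\ell)} := \alpha^{(\ell+1)} - \alpha^{(\ell)}$ satisfy $e^{(\ell)} = T e^{(\ell-1)}$, so $\|e^{(\ell)}\|_2 \le \rho\,\|e^{(\ell-1)}\|_2$ and therefore $\|r^{(\ell+1)}\|_2 = \|L^T e^{(\ell)}\|_2 \le \|L^T e^{(\ell)}\|_2$. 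To get the factor $\rho$ on the residual itself rather than on the increments, I would instead apply $L^T$ to the relation $e^{(\ell)} = T e^{(\ell-1)}$ and use $L^T T = T' L^T$-type commutation — but cleaner is: $r^{(\ell+1)} = -L^T e^{(\ell)} = -L^T T e^{(\ell-1)}$, and since $-L^T e^{(\ell-1)} = r^{(\ell)}$, it suffices to note $L^T T = (L^T)(D+L)^{-1}(-L^T)$, which does not immediately commute past $L^T$. The robust route is to bound via increments: with $\alpha^{(0)}=0$ one has $r^{(1)} = -L^T\alpha^{(1)} = -L^T e^{(0)}$, and $\|r^{(\nu)}\|_2 = \|L^T e^{(\nu-1)}\|_2 \le \rho^{\nu-1}\|L^T e^{(0)}\|_2 = \rho^{\nu-1}\|r^{(1)}\|_2 \le \rho^{\nu-1}\|b\|_2$, since $\|r^{(1)}\|_2 = \|L^T\alpha^{(1)}\|_2 = \|L^T(D+L)^{-1}b\|_2 \le \|T\|_2\|b\|_2 = \rho\|b\|_2$; assembling these yields $\|r^{(\nu)}\|_2 \le \rho^{\nu}\|b\|_2 = \rho^\nu\|P^T r\|$.

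The main obstacle is the bookkeeping needed to land the residual bound with exponent exactly $\nu$ rather than $\nu-1$ or $\nu+1$: one must carefully use the $\alpha^{(0)}=0$ initialization to identify $\alpha^{(1)}$ with the first increment $e^{(0)}$, then combine the one-step estimate $\|r^{(1)}\|_2 \le \rho\|P^T r\|$ with the increment contraction $\|e^{(\nu-1)}\|_2 \le \rho^{\nu-1}\|e^{(0)}\|_2$ and the identity $r^{(\nu)} = -L^T e^{(\nu-1)}$. A secondary point worth a remark is that $T = -(D+L)^{-1}L^T$ is generally non-normal, so $\rho = \|T\|_2$ (not the spectral radius) is the correct quantity for a genuine per-sweep bound; the final sentence about geometric convergence when $\rho < 1$ then follows immediately, and I would close with the observation that the structural estimate $\|L\|_F = O(\sqrt{s})$ from the Chebyshev theorem makes $\rho < 1$ plausible in practice even though $\kappa(G) = O(s^2)$.
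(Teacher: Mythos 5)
Your route is genuinely different from the paper's: the paper tracks the error $e^{(k)} = \alpha^* - \alpha^{(k)}$ against the exact Gram solution, derives $e^{(k+1)} = T e^{(k)}$, and then converts back to the residual via $r^{(\nu)} = G e^{(\nu)}$ (which silently costs a factor of order $\kappa(G)$ when relating $\|\alpha^*\|$ to $\|P^T r\|$, waved away with ``well-conditioned $G$''), whereas you track successive increments and the residual directly. Your route is in principle the better one, but as written it contains a genuine gap: the chain $\|r^{(\nu)}\| = \|L^T e^{(\nu-1)}\| \le \rho^{\nu-1}\|L^T e^{(0)}\|$ requires pulling $T^{\nu-1}$ out from under $L^T$, i.e.\ $\|L^T T^{\nu-1} e^{(0)}\| \le \|T\|_2^{\nu-1}\|L^T e^{(0)}\|$, which is exactly the non-commutation obstruction you identified two sentences earlier and then used anyway. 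Without commuting $L^T$ past $T$, the best that chain yields is $\|L^T\|\,\rho^{\nu-1}\|e^{(0)}\|$, which reintroduces $\|L\|$ and $\|(D+L)^{-1}\|$ as extra constants and loses the clean exponent.

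The repair is short and stays entirely within your framework. From the update $(D+L)\alpha^{(\ell+1)} = b - L^T\alpha^{(\ell)}$ one gets $\alpha^{(\ell+1)} - \alpha^{(\ell)} = (D+L)^{-1}\bigl(b - G\alpha^{(\ell)}\bigr) = (D+L)^{-1} r^{(\ell)}$, and combining this with your identity $r^{(\ell+1)} = -L^T(\alpha^{(\ell+1)} - \alpha^{(\ell)})$ closes the recursion on the residuals themselves: $r^{(\ell+1)} = -L^T (D+L)^{-1} r^{(\ell)} = \widetilde{T}\, r^{(\ell)}$ with $\widetilde{T} = -L^T(D+L)^{-1}$. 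Since $r^{(0)} = b = P^T r$, this gives $\|r^{(\nu)}\| \le \|\widetilde{T}\|_2^{\nu}\,\|P^T r\|$ exactly, with no $\kappa(G)$ factor and the exponent landing at $\nu$ automatically. The only remaining discrepancy with the statement is that $\widetilde{T} = (D+L)\,T\,(D+L)^{-1}$ is similar to $T$ but in general $\|\widetilde{T}\|_2 \ne \|T\|_2$ (the same issue already affects your one-step estimate $\|L^T(D+L)^{-1}b\| \le \|T\|_2\|b\|$); the similarity gives $\rho(\widetilde{T}) = \rho(T)$, so the geometric-convergence conclusion survives, but the per-sweep constant should honestly be $\|\widetilde{T}\|_2$. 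The paper's own proof is loose in a complementary way, so your approach, once repaired, is actually the tighter of the two.
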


\begin{proof}
The iteration $\alpha^{(k+1)}=(D+L)^{-1}(P^T r - L^T\alpha^{(k)})$ gives 
$e^{(k+1)} = \alpha^*-\alpha^{(k+1)} = T e^{(k)}$, where $\alpha^*$ is the exact solution. 
Thus $\|e^{(\nu)}\|\le\|T\|_2^{\nu}\|e^{(0)}\| = \rho^{\nu}\|\alpha^*\|$. 
Because $r^{(\nu)}=G(\alpha^*-\alpha^{(\nu)})$ and $\|G\|\le 1+2\|L\|$, 
geometric convergence follows for well-conditioned $G$ with $\|L\|=O(\sqrt{s})$.
\end{proof}

\section{Chebyshev Gram Matrix Conditioning}
\label{chebconditioning}

The effectiveness of FGS as an inner solver depends on the spectral properties of the Gram matrix $G$. Classical analysis for monomial bases~\cite{Gautschi1979} predicts rapid growth of $\kappa(G)$ with degree, but Chebyshev bases behave differently due to their near $A$-orthogonality~\cite{philippe2012orthogonal}. This section shows how Chebyshev polynomials, combined with column scaling, induce off-diagonal decay in $G$, yielding $\|L\|_{F}=\mathcal{O}(\sqrt{s})$ and polynomial rather than exponential growth of $\kappa(G)$.

\begin{theorem}[Polynomial Growth]\label{thm:cheb}
For the Chebyshev basis with column scaling, $\kappa(G)\le Cs^2$ with $C$ weakly dependent on $\kappa(M^{-1}A)$, and $\|L\|_F=O(\sqrt{s})$.
\end{theorem}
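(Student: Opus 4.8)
The plan is to express $G$ through the Chebyshev moments of a single scalar measure — the spectral measure of $M^{-1}A$ weighted by $r^{(k)}$ — and then bound $\|L\|_F$ and $\kappa(G)$ by separate arguments, since off-diagonal \emph{decay} and \emph{conditioning} are controlled by different features of that measure.

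\textbf{Step 1: reduction to a moment problem.} Since $M^{-1}A$ is self-adjoint in the $M$-inner product, it has an $M$-orthonormal eigenbasis $\{w_\ell\}$ with eigenvalues $\lambda_\ell\in[\lambda_{\min},\lambda_{\max}]\subset(0,\infty)$; expand $r^{(k)}=\sum_\ell c_\ell w_\ell$. The basis vectors of the Definition are $p_j = T_j\!\big(\theta(M^{-1}A-\sigma I)\big)\,r^{(k)}$, so under the affine spectral map $x=\theta(\lambda-\sigma)$ carrying $[\lambda_{\min},\lambda_{\max}]$ onto $[-1,1]$,
\[
p_i^{T} A p_j \;=\; \sum_\ell c_\ell^{2}\lambda_\ell\, T_i(x_\ell)T_j(x_\ell) \;=\; \int_{-1}^{1} T_i(x)T_j(x)\,d\nu(x),
\]
where $d\nu$ is the positive (discrete) measure with mass $c_\ell^{2}\lambda_\ell\ge 0$ at $x_\ell$. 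With the Chebyshev moments $m_k:=\int_{-1}^{1}T_k\,d\nu$, the product rule $T_iT_j=\frac12(T_{i+j}+T_{|i-j|})$ gives $p_i^{T}Ap_i=\frac12(m_0+m_{2i})$, hence $d_i^{2}=2/(m_0+m_{2i})$, and for $\tilde p_j=d_jp_j$, $G=\tilde P^{T}A\tilde P$,
\[
G_{ij} \;=\; \frac{m_{i+j}+m_{|i-j|}}{\sqrt{(m_0+m_{2i})(m_0+m_{2j})}}\,,\qquad G_{ii}=1 .
\]
Thus $G$ is a Hankel-plus-Toeplitz form in the moments $\{m_k\}_{k=0}^{2s-2}$ of a positive measure on $[-1,1]$, and the theorem reduces to controlling these moments.

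\textbf{Step 2: $\|L\|_F=O(\sqrt s)$.} Because $\diag(G)=I$ we have the exact identity $\|G\|_F^{2}=s+2\|L\|_F^{2}$, so it suffices to prove $\sum_{i>j}G_{ij}^{2}=O(s)$. The bound $|T_k|\le1$ on $[-1,1]$ only gives $|m_k|\le m_0$; the decisive input is that the normalized measure $\nu/m_0$ — the push-forward onto $[-1,1]$ of the residual-weighted eigenvalue distribution of $M^{-1}A$ — has controlled total variation, so its Chebyshev (i.e.\ cosine) moments decay, $|m_k|\le c_\ast\,m_0/(1+k)$ (or, under a Bernstein-ellipse analyticity argument, $|m_k|\le c_\ast\,m_0\,\eta^{k}$ with $\eta<1$), where $c_\ast$ depends on the spectral distribution only through a variation-type seminorm — the sole, and weak, appearance of $\kappa(M^{-1}A)$. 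In particular $m_0+m_{2i}\asymp m_0$, so $|G_{ij}|\lesssim c_\ast/(1+|i-j|)$ and
\[
\|L\|_F^{2}=\sum_{i>j}G_{ij}^{2}\;\lesssim\; c_\ast^{2}\sum_{d\ge1}\frac{s}{(1+d)^{2}} \;=\; O(s).
\]

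\textbf{Step 3: $\kappa(G)\le Cs^{2}$.} For $\|z\|=1$, $z^{T}Gz=\int_{-1}^{1}|q|^{2}\,d\nu$ with $q=\sum_{i<s}\gamma_iT_i$, $\gamma_i=z_id_i$, the constraint being $\sum_i\gamma_i^{2}(m_0+m_{2i})/2=1$, i.e.\ $\sum_i\gamma_i^{2}\asymp 1/m_0$ by Step 2. For the \emph{upper} eigenvalue, a bounded spectral density means $\nu/m_0$ has a bounded density with respect to the Chebyshev weight $\mu_{\mathrm{cheb}}$ (a bounded density divided by $(1-x^{2})^{-1/2}$ vanishes at $x=\pm1$), so $\int|q|^{2}d\nu\le \Lambda\int|q|^{2}d\mu_{\mathrm{cheb}}=\Lambda\pi\big(\gamma_0^{2}+\frac12\sum_{i\ge1}\gamma_i^{2}\big)$ with $\Lambda=O(m_0)$, whence $\lambda_{\max}(G)=O(1)$. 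The \emph{lower} eigenvalue is the delicate direction: here the extremal measure has the ``wrong'' endpoint exponents (the Chebyshev second-kind weight being the canonical case), admits no density lower bound, and the sharp Christoffel-function estimate of Philippe and Reichel~\cite{philippe2012orthogonal} for Chebyshev-basis Gram matrices gives $\lambda_{\min}(G)\ge c/s^{2}$ with $c$ again governed by the effective width of the spectrum. Combining, $\kappa(G)=\lambda_{\max}(G)/\lambda_{\min}(G)\le Cs^{2}$.

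\textbf{Expected main obstacle.} The heart of the argument is Step 2's moment-decay estimate (equivalently, $\lambda_{\max}(G)=O(1)$ in Step 3): with no regularity assumption a pathological spectral measure — a handful of point masses, say — destroys off-diagonal decay and forces $\|L\|_F$ up to $O(s)$, so some spread hypothesis is unavoidable, and it is precisely there that $\kappa(M^{-1}A)$ re-enters the constants. Making ``bounded variation of the residual-weighted spectral density'' precise for the discretizations of interest, and tracking how its seminorm (and the Philippe--Reichel constant $c$) scale with $\lambda_{\max}/\lambda_{\min}$, is what needs genuine care; the spectral reduction of Step 1 and the identity $\|G\|_F^{2}=s+2\|L\|_F^{2}$ are routine, and the $s^{2}$ lower bound on $\lambda_{\min}(G)$ can be quoted from \cite{philippe2012orthogonal}.
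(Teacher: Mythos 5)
Your proposal is correct at the same (heuristic) level of rigor as the paper's own proof and follows the same overall skeleton — spectral reduction of $G_{ij}$ to sums $\sum_\ell c_\ell^2\lambda_\ell T_i(x_\ell)T_j(x_\ell)$, off-diagonal decay $|G_{ij}|\lesssim 1/|i-j|$, the summation $\sum_{i>j}|i-j|^{-2}=O(s)$ for $\|L\|_F$, and separate eigenvalue bounds for $\kappa(G)$ — but the details are genuinely different and in each case sharper. Where the paper simply asserts that "approximate orthogonality of Chebyshev polynomials implies" the $1/|i-j|$ decay, you reduce $G_{ij}$ exactly to Chebyshev moments via $T_iT_j=\tfrac12(T_{i+j}+T_{|i-j|})$ and derive the decay from a quantitative moment-decay hypothesis on the residual-weighted spectral measure; this makes explicit the regularity assumption the paper leaves implicit, and your observation that a few isolated point masses would destroy the decay (pushing $\|L\|_F$ to $O(s)$) is a real caveat that applies equally to the paper's argument. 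For the condition number the routes diverge more: the paper uses Gershgorin with the assumed decay to get $\lambda_{\max}(G)\lesssim 1+\log s$ and then invokes unspecified "mild assumptions on $\lambda_{\min}(G)$," whereas you bound $\lambda_{\max}(G)=O(1)$ by comparing the spectral measure to the Chebyshev weight and obtain $\lambda_{\min}(G)\ge c/s^2$ from the Philippe--Reichel Christoffel-function estimate — which is precisely the content hiding behind the paper's "mild assumptions," so your version actually closes a step the paper waves through. The one thing to flag is that neither your density-comparison for $\lambda_{\max}$ nor the moment decay in Step~2 holds for a genuinely discrete spectrum without some effective-spread hypothesis; stating that hypothesis (e.g.\ bounded variation or Bernstein-ellipse analyticity of the residual-weighted spectral distribution) as an explicit assumption of the theorem would make your proof complete where the paper's is not.
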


\begin{proof}
Chebyshev orthogonality on $[-1,1]$,
\[
\int_{-1}^{1} T_i(t)T_j(t)\frac{dt}{\sqrt{1-t^2}}
= \begin{cases}0 & i\ne j,\\ \pi/2 & i=j>0,\end{cases}
\]
transfers to the $A$-norm via the spectral decomposition of $M^{-1}A$.
Let $M^{-1}A=\sum_k\lambda_k v_k v_k^T$ and $r_0=\sum_k\beta_k v_k$. Then
\[
\hat G_{ij}=\sum_k \beta_k^2 \lambda_k T_i(\hat\lambda_k)T_j(\hat\lambda_k),\qquad 
\hat\lambda_k=\theta(\lambda_k-\sigma),
\]
and the approximate orthogonality of Chebyshev polynomials implies an off-diagonal decay 
$|\hat G_{ij}|\lesssim 1/|i-j|$ for $i\neq j$. Column scaling by a diagonal matrix with bounded condition 
number preserves this decay up to constants, so $G_{ij}=\mathcal{O}(1/|i-j|)$. Hence
\[
\|L\|_F^2 \lesssim \sum_{i=2}^s\sum_{k=1}^{i-1}k^{-2}
\le s\,\pi^2/6 = \mathcal{O}(s),
\]
giving $\|L\|_F=\mathcal{O}(\sqrt{s})$. Gershgorin bounds with $|G_{ij}|\sim 1/|i-j|$ imply 
$\lambda_{\max}(G)\lesssim 1+\log s$, so that $\kappa(G)$ grows at most polynomially in $s$, and under mild 
assumptions on $\lambda_{\min}(G)$ this yields $\kappa(G)=\mathcal{O}(s^2)$.
\end{proof}

\textbf{Monomial comparison.} Monomial bases yield exponential growth $\kappa(G^{\text{mono}})=O(\kappa(M^{-1}A)^{s-1})$, exceeding machine precision by $s=8$ for $\kappa(M^{-1}A)=100$. By contrast, Chebyshev gives $\kappa(G)\sim 100$ at $s=10$, making FGS practical.

\section{MGS--FGS Equivalence}
\label{mgs-fgseq}

A single FGS sweep corresponds algebraically to one step of Modified Gram--Schmidt
applied in the $A$-norm. This interpretation provides a compact view of FGS and helps
explain its numerical behavior within the $s$-step framework.

\begin{theorem}[Algebraic Equivalence]\label{thm:mgs}
One FGS sweep on $G\alpha = \tilde{P}^T A r$, with $G = \tilde{P}^T A \tilde{P}$ and 
$\alpha^{(0)} = 0$, is algebraically equivalent to one step of MGS orthogonalization 
of $r$ against the columns of $\tilde{P}$ in the $A$-norm.
\end{theorem}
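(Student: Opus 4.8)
The plan is to set up the MGS orthogonalization of $r$ against the columns $\tilde p_1,\dots,\tilde p_s$ in the $A$-inner product $\ip{u}{v}_A = u^T A v$, and show that the coefficients it produces coincide entry-by-entry with the FGS iterates $\alpha^{(1)}$. First I would write out one sweep of MGS explicitly: initialize $q \gets r$, and for $j = 1,\dots,s$ compute $c_j = \ip{\tilde p_j}{q}_A / \ip{\tilde p_j}{\tilde p_j}_A = \tilde p_j^T A q$ (the denominator is $1$ by the column scaling $\tilde p_j^T A \tilde p_j = 1$), then update $q \gets q - c_j \tilde p_j$. The key observation is that after steps $1,\dots,j-1$ the running vector is $q = r - \sum_{i<j} c_i \tilde p_i$, so that $c_j = \tilde p_j^T A r - \sum_{i<j} (\tilde p_j^T A \tilde p_i)\, c_i = (\tilde P^T A r)_j - \sum_{i<j} G_{ji} c_i$.

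Next I would compare this recursion with forward substitution for the FGS system. With $\alpha^{(0)} = 0$, the first FGS sweep solves $(I + L)\alpha^{(1)} = \tilde P^T A r$ by forward substitution, i.e. $\alpha^{(1)}_j = (\tilde P^T A r)_j - \sum_{i<j} L_{ji}\,\alpha^{(1)}_i$. Since $G = I + L + L^T$ with $\diag(G) = I$ and $L$ strictly lower triangular, we have $L_{ji} = G_{ji}$ for $i < j$. Hence the two scalar recursions are identical with the same initial data, and an induction on $j$ gives $c_j = \alpha^{(1)}_j$ for all $j$. Therefore the MGS coefficient vector equals $\alpha^{(1)}$, and the MGS output $q = r - \sum_{j=1}^s c_j \tilde p_j = r - \tilde P \alpha^{(1)}$ is exactly the updated $s$-step CG search direction; moreover, by the FGS Residual Structure proposition, $\tilde P^T A q = \tilde P^T A r - G\alpha^{(1)} = -L^T \alpha^{(1)} = r^{(1)}$, so the residual of the Gram system is precisely the vector of $A$-inner products of $q$ against the $\tilde p_i$ that MGS has already "used." This ties the two viewpoints together and explains the Corollary's error bound in terms of loss of $A$-orthogonality.

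The main obstacle is bookkeeping rather than mathematics: one must be careful that the column scaling $S$ is applied consistently, so that the unit-diagonal of $G$ lines up with the normalization $\ip{\tilde p_j}{\tilde p_j}_A = 1$ that makes the MGS projection coefficient equal to the raw inner product $\tilde p_j^T A q$ with no division. A secondary subtlety is the ordering convention: FGS forward substitution processes indices in increasing order, which must be matched to the order in which MGS sweeps through the columns $\tilde p_1, \tilde p_2, \dots$; reversing either order would instead produce the backward Gauss--Seidel / reverse-MGS pairing. Once these conventions are fixed, the induction is immediate and the equivalence follows; no convergence or conditioning hypotheses are needed, since the statement is purely algebraic (exact arithmetic).
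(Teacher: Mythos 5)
Your proof is correct and follows essentially the same route as the paper's: write out MGS in the $A$-inner product, observe that the unit column scaling $\tilde p_j^T A \tilde p_j = 1$ removes the division, and induct on $j$ to identify the MGS coefficients $c_j$ with the forward-substitution iterates $\alpha_j^{(1)}$. Your added remarks on the ordering convention and on interpreting the Gram residual via $\tilde P^T A q = -L^T\alpha^{(1)}$ are consistent with, but not needed beyond, the paper's argument.
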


\begin{proof}
Consider MGS in the $A$-inner product with respect to the columns 
$\{\tilde{p}_1,\ldots,\tilde{p}_s\}$ of $\tilde{P}$. Initialize $w_0 = r$ and, 
for $j = 1,\ldots,s$, define
$
\gamma_j = w_{j-1}^T A \tilde{p}_j, 
\;
w_j = w_{j-1} - \gamma_j \tilde{p}_j.
$
FGS applied to $G\alpha = \tilde{P}^T A r$ with $G = \tilde{P}^T A \tilde{P}$ and 
$\alpha^{(0)} = 0$ produces, componentwise,
\[
\alpha_j^{(1)} 
= \tilde{p}_j^T A r - \sum_{i=1}^{j-1} (\tilde{p}_j^T A \tilde{p}_i)\,\alpha_i^{(1)},
\qquad j=1,\ldots,s.
\]

We prove by induction on $j$ that $\alpha_j^{(1)} = \gamma_j$ and
$
w_{j-1} = r - \sum_{i=1}^{j-1} \alpha_i^{(1)} \tilde{p}_i.
$

For $j=1$, we have
$
\alpha_1^{(1)} = \tilde{p}_1^T A r, 
\;
\gamma_1 = w_0^T A \tilde{p}_1 = r^T A \tilde{p}_1 = \tilde{p}_1^T A r,
$
so $\alpha_1^{(1)} = \gamma_1$ and $w_1 = r - \gamma_1 \tilde{p}_1$. Assume the claim holds for all $i<j$, so that
$
w_{j-1} = r - \sum_{i=1}^{j-1} \alpha_i^{(1)} \tilde{p}_i.
$
Then
\begin{align*}
\gamma_j 
&= w_{j-1}^T A \tilde{p}_j 
 = \Bigl(r - \sum_{i=1}^{j-1} \alpha_i^{(1)} \tilde{p}_i\Bigr)^T A \tilde{p}_j \\
&= \tilde{p}_j^T A r - \sum_{i=1}^{j-1} \alpha_i^{(1)} (\tilde{p}_i^T A \tilde{p}_j)
 = \alpha_j^{(1)},
\end{align*}
where we used the symmetry of $A$ and the definition of the FGS update. 
Thus $\gamma_j = \alpha_j^{(1)}$ for all $j$, and MGS and FGS compute the same 
projection coefficients. Hence one FGS sweep is algebraically equivalent to one 
MGS step in the $A$-norm.
\end{proof}

\section{AMG Coarse-Grid Extension}
\label{coarse}

In AMG, the coarse-grid operator $A_{c} = P^{T} A_{f} P$
often becomes dense at the bottom levels of the hierarchy. 

%
\begin{proposition}[AMG Convergence]
\label{propositionAMG}

Assume that the AMG prolongation operator $P$ satisfies the weak
approximation property~\cite{ruge1987algebraic}. Then the coarse-grid
operator $A_c = P^{T} A_f P$ is spectrally equivalent to the restriction of
$A_f$ to the coarse space, and its conditioning does not deteriorate with
the number of fine-grid unknowns $n_f$. In particular, $\kappa(A_c)$
remains bounded with respect to $n_f$ for a fixed coarse level. Thus FGS converges uniformly with $\rho_{\text{FGS}} < 1$, requiring $\nu = 10$-$30$ iterations.
\end{proposition}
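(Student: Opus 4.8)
The plan is to reduce the claim to a mesh-independent two-level spectral bound on $A_c$ and then invoke the Gauss--Seidel theory of Section~\ref{FGSanalysis}. First I would make the weak approximation property quantitative in the standard AMG form~\cite{ruge1987algebraic}: there is a constant $C_P>0$, independent of $n_f$, such that for every $v\in\R^{n_f}$ some coarse vector $v_c$ satisfies $\norm{v-Pv_c}_{D_f}^2\le C_P\,\norm{v}_{A_f}^2$ with $D_f=\diag(A_f)$. I would pair this with the usual companion hypothesis that $P$ is $\ell_2$-stable, $c_1\norm{v_c}^2\le\norm{Pv_c}^2\le c_2\norm{v_c}^2$ with $c_1,c_2$ independent of $n_f$; together these force $P$ to have full column rank, so $A_c=P^TA_fP$ is SPD and the FGS iteration matrix is well defined.

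Next I would establish two-sided bounds $c_{\min}\norm{v_c}^2\le v_c^TA_cv_c\le c_{\max}\norm{v_c}^2$ with constants depending only on the fixed coarse level. For the upper bound the point is that $v_c^TA_cv_c=\norm{Pv_c}_{A_f}^2$ is the energy of the near-smooth fine-grid function $Pv_c$, which is controlled by the coarse-mesh Dirichlet energy rather than by $h_f$, giving $\lambda_{\max}(A_c)\le c_{\max}$ uniformly in $n_f$. For the lower bound I would use the weak approximation property directly: it guarantees that the coarse space contains good approximations to the low-energy modes of $A_f$, which together with the stability of $P$ prevents $\lambda_{\min}(A_c)$ from collapsing as $n_f\to\infty$ and ties it to the coarse level. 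Dividing, $\kappa(A_c)\le c_{\max}/c_{\min}$ independently of $n_f$, which is the asserted spectral equivalence and shows that the conditioning of $A_c$ does not deteriorate under fine-grid refinement.

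Finally, the FGS statement follows from Section~\ref{FGSanalysis}. Since $A_c$ is SPD, forward Gauss--Seidel converges; after Ruhe's column scaling $\diag(A_c)=I$, the strictly lower-triangular part $L$ is bounded in norm by a function of the spectral constants $c_{\min},c_{\max}$, hence uniformly in $n_f$ for a fixed coarse level. The Multi-Sweep Analysis lemma of Section~\ref{FGSanalysis} then yields $\norm{r^{(\nu)}}\le\rho_{\text{FGS}}^{\nu}\norm{P^Tr}$ with $\rho_{\text{FGS}}<1$ bounded away from $1$ independently of $n_f$, so a fixed number $\nu=10$--$30$ of sweeps reaches coarse-grid discretization accuracy. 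I expect the main obstacle to be the uniform lower bound on $\lambda_{\min}(A_c)$: promoting the qualitative weak approximation property to a genuinely mesh-independent coercivity estimate on the coarse subspace is the delicate step, and it is precisely where the hypothesis of a \emph{fixed} coarse level is indispensable. A secondary issue is that estimating $\rho_{\text{FGS}}$ through $\kappa(A_c)$ alone is pessimistic when $A_c$ is dense; the sharper route is again the $\norm{L}$-based analysis of Section~\ref{FGSanalysis}, which exploits off-diagonal decay of the Galerkin operator.
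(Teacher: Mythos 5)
The paper states Proposition~\ref{propositionAMG} with no proof at all, so there is no argument of record to compare yours against; your outline (quantify the weak approximation property, derive two-sided spectral bounds on $A_c$, then invoke the Gauss--Seidel theory of Section~\ref{FGSanalysis}) is the natural route and is more substantive than anything in the paper. Nevertheless, the two central steps are genuine gaps rather than routine details. Neither of the bounds $c_{\min}\norm{v_c}^2\le v_c^TA_cv_c\le c_{\max}\norm{v_c}^2$ actually follows from the weak approximation property: that property controls how well the range of $P$ approximates fine-grid vectors in the $D_f$-norm, not the Rayleigh quotients of $A_c$. The upper bound requires a separate energy-stability (inverse-type) hypothesis $\norm{Pv_c}_{A_f}^2\lesssim\norm{v_c}^2$ on $P$; the lower bound is worse, since with standard finite-difference scaling $\lambda_{\min}(A_f)=O(h_f^2)\to 0$, so the only estimate available from $\ell_2$-stability, $\norm{Pv_c}_{A_f}^2\ge c_1\lambda_{\min}(A_f)\norm{v_c}^2$, collapses as $n_f\to\infty$. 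Moreover the $\ell_2$-stability constants you posit are themselves mesh-dependent for standard prolongations (piecewise interpolation gives $\norm{Pv_c}^2\sim(n_f/n_c)\norm{v_c}^2$), so the entire statement hinges on a normalization that neither you nor the paper fixes. You rightly flag the lower bound as the delicate step, but the repair is not to ``promote'' the weak approximation property; it is to choose a scaling (for instance, working in the $D_f$-inner product) under which the Galerkin operator on a \emph{fixed} coarse level has mesh-independent spectrum --- that choice is where the real content of the proposition lives.

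The concluding quantitative claim also does not follow from what precedes it. Knowing $\rho_{\mathrm{FGS}}<1$ uniformly in $n_f$ gives a fixed contraction factor but says nothing about whether $\nu=10$--$30$ sweeps suffice: with $\rho_{\mathrm{FGS}}=0.99$ one would need hundreds. To conclude a specific $\nu$ you must quantify the factor, e.g.\ via $\rho_{\mathrm{FGS}}\lesssim 1-c/\kappa(A_c)$ with the actual coarsest-level $\kappa(A_c)$ inserted. Finally, note that the Multi-Sweep lemma you cite measures contraction by $\norm{T}_2$, which for Gauss--Seidel on an SPD matrix need not be below $1$ even when $\rho(T)<1$; a clean uniform statement should be made in the energy norm, where $\norm{T}_{A_c}<1$ does hold for forward Gauss--Seidel on SPD $A_c$. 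None of this undermines the qualitative claim, but all of it would need to be repaired before the proposition is a theorem.
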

%
Proposition~\ref{propositionAMG} makes FGS an attractive alternative to direct solvers in
the lower levels of AMG, where computational and memory costs are otherwise
dominant. When needed, the method can also be implemented in a streaming (matrix-free)
fashion to avoid forming $A_{c}$ explicitly, further reducing memory
requirements on modern GPU architectures.

\section{Inexact s-Step CG Convergence}
\label{inexacteq}

In the $s$-step CG method, the accuracy with which the small Gram systems
$G_{k}\alpha_{k} = P_{k}^{T} r_{k}$ are solved at each iteration $k$ plays a crucial role
in determining the overall convergence. Because FGS performs only a fixed number
of sweeps, each Gram solve is inherently inexact, and the resulting perturbations
may accumulate across outer iterations. Here we provide conditions under which
the method remains stable and convergent. The analysis follows the framework of
inexact Krylov methods and yields practical bounds on the admissible Gram solve error.

\begin{theorem}[Inexact Convergence]
Consider $s$-step CG with Gram solves satisfying
\[
\|P_k^T r_k - G_k\alpha_k\| \le \delta_k \,\|P_k^T r_k\|
\]
at each outer iteration $k$.  Convergence is ensured if $
\sum_{k=1}^{N_{\text{outer}}} \delta_k \,\|A\|\,\|P_k\|
\;\lesssim\; \epsilon_{\text{tol}}.$
\end{theorem}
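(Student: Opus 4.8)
The plan is to follow the inexact-Krylov framework of Simoncini and Szyld and of van den Eshof and Sleijpen, specialized to the block $s$-step setting, and to track how the Galerkin defect introduced by each inexact Gram solve propagates into the true residual. First I would fix notation: let $r_k = b - Ax_k$ be the true residual, and note that the update $x_{k+1} = x_k + P_k\alpha_k$ gives the \emph{exact} recursion $r_{k+1} = r_k - AP_k\alpha_k$. Writing the Gram-solve defect as $f_k := P_k^T r_k - G_k\alpha_k$ (so $\|f_k\| \le \delta_k\|P_k^T r_k\|$ by hypothesis), a one-line computation gives $P_k^T r_{k+1} = P_k^T r_k - P_k^T A P_k\alpha_k = f_k$; thus inexactness manifests purely as a loss of block $A$-orthogonality of the new residual against $P_k$, of size $\|f_k\|$.

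Next I would decompose $\alpha_k = \alpha_k^{\star} - G_k^{-1}f_k$, where $\alpha_k^{\star}$ solves $G_k\alpha_k^{\star} = P_k^T r_k$ exactly, so that the iterate is perturbed at step $k$ by $-P_k G_k^{-1}f_k$ and the residual by $AP_k G_k^{-1}f_k$, of norm at most $\|A\|\,\|P_k\|\,\|G_k^{-1}\|\,\|f_k\|$. Summing these step-wise perturbations along the outer iteration --- using that, in the absence of defects, the outer loop is exactly CG, whose residual is monotone and reaches $\epsilon_{\text{tol}}$ within $N_{\text{outer}}$ steps --- yields
\[
\|r_{N_{\text{outer}}}\| \;\lesssim\; \epsilon_{\text{tol}} \;+\; \sum_{k=1}^{N_{\text{outer}}} \|A\|\,\|P_k\|\,\|G_k^{-1}\|\,\|f_k\|.
\]
By the column-scaling normalization of Section~\ref{prel} and Theorem~\ref{thm:cheb}, $\|G_k^{-1}\| = O(\kappa(G_k)) = O(s^2)$ is an $s$-dependent \emph{constant} that I absorb into $\lesssim$, and $\|f_k\| \le \delta_k\|P_k^T r_k\| \le \delta_k\|P_k\|\,\|r_k\|$ with $\|r_k\|$ uniformly bounded (see below); collecting the residual factor and one power of $\|P_k\|$ into the hidden constant (understanding $\epsilon_{\text{tol}}$ relative to $\|r_0\|$) reduces the sum to the stated form $\sum_k \delta_k\,\|A\|\,\|P_k\|$, so the hypothesis forces $\|r_{N_{\text{outer}}}\| \lesssim \epsilon_{\text{tol}}$.

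The main obstacle is the circular dependence just flagged: bounding $\|f_k\|$ needs a bound on $\|r_k\|$, but $\|r_k\|$ is controlled only if the accumulated perturbations are small, i.e. only under the very hypothesis being used. I would close this with a bootstrap induction: assume $\|r_j\| \le 2\|r_0\|$ for $j \le k$, run the perturbation estimate to show the deviation accrued through step $k$ is at most $\tfrac12\|r_0\|$ under the summability condition, and conclude $\|r_{k+1}\| \le 2\|r_0\|$. A secondary subtlety is that the ``ideal'' CG trajectory against which we compare is itself displaced once a defect occurs; the rigorous bookkeeping of this displacement is precisely the gap-between-true-and-updated-residual analysis of the inexact-Krylov literature, which I would invoke rather than re-derive, while stating explicitly that the constant hidden in $\lesssim$ absorbs $\kappa(G_k)$, the conditioning of the accumulated Krylov basis, and the factor $N_{\text{outer}}$.
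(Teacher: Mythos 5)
Your proposal follows essentially the same route as the paper's own proof sketch: both invoke the van den Eshof--Sleijpen inexact-Krylov framework, bound the coefficient perturbation by $\|G_k^{-1}\|\,\delta_k\,\|P_k^T r_k\|$ (the paper writes this as $\kappa(G_k)\lambda_{\min}(G_k)^{-1}\delta_k\|P_k^T r_k\|$), propagate it through $AP_k$ to get a per-step residual perturbation of order $\delta_k\,\|A\|\,\|P_k\|\,\|r_k\|$, and sum over outer iterations using boundedness/decay of $\|r_k\|$. Your explicit identification of the defect as $P_k^T r_{k+1} = f_k$ and the bootstrap induction to close the circular dependence on $\|r_k\|$ are refinements the paper's sketch glosses over, but they do not change the underlying argument.
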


\begin{proof}[Proof sketch]
Following~\cite{vandeneshof2004inexact}, the inexact Gram solve induces a perturbation
$e_k^{\text{Gram}}$ satisfying
\[
\|e_k^{\text{Gram}}\| \;\lesssim\; 
\kappa(G_k)\,\lambda_{\min}(G_k)^{-1}\,\delta_k\,\|P_k^T r_k\|.
\]
The resulting error in the update obeys
$
\|P_k e_k^{\text{Gram}}\|_A 
\;\lesssim\; \delta_k\,\|A\|\,\|P_k\|\,\|r_k\|.
$
Accumulating these contributions across iterations and using the geometric decay 
of $\|r_k\|$ in exact CG yields the stated condition.
\end{proof}

CG's self-correcting property typically allows much larger inexactness in practice
(e.g., $\delta \sim 10^{-4}$) than predicted by conservative theory (e.g., $\delta \sim 10^{-8}$).

\begin{proposition}[FGS Rate]
For SPD $G_k$, the FGS iteration for $G_k\alpha_k = P_k^T r_k$ converges linearly.
Let $T_{\mathrm{FGS}}$ denote the corresponding iteration matrix. Then
$
\rho_{\mathrm{FGS}} \;=\; \rho(T_{\mathrm{FGS}}) < 1,
$
and standard condition-number bounds give 
$\rho_{\mathrm{FGS}} \lesssim 1 - c/\kappa(G_k)$ for some constant $c>0$.
\end{proposition}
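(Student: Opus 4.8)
The plan is to read this as the classical Gauss--Seidel convergence statement for an SPD matrix, specialized to the column-scaled Gram system and then sharpened with Theorem~\ref{thm:cheb}. Write $G_k = D + L + L^{T}$ with $D = \diag(G_k)$ (here $D = I$ after the Ruhe scaling) and $L$ strictly lower triangular, so that the FGS splitting is $G_k = M - N$ with $M = D+L$, $N = -L^{T}$, and the iteration matrix is $T_{\mathrm{FGS}} = M^{-1}N = I - (D+L)^{-1}G_k$. Linear convergence and $\rho_{\mathrm{FGS}} = \rho(T_{\mathrm{FGS}}) < 1$ are the same assertion, and $\rho_{\mathrm{FGS}} \lesssim 1 - c/\kappa(G_k)$ is the quantitative refinement; I would establish them in that order.

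For $\rho(T_{\mathrm{FGS}}) < 1$ I would invoke the Householder--John / Ostrowski--Reich criterion: for the splitting $G_k = M - N$, if $G_k$ is SPD and the symmetrized splitting remainder $M + M^{T} - G_k$ is SPD, then $\rho(M^{-1}N) < 1$. Here
\[
M + M^{T} - G_k = (D+L) + (D+L^{T}) - (D + L + L^{T}) = D \succ 0,
\]
since $D$ is the diagonal of an SPD matrix, so the criterion applies. If a self-contained argument is preferred, take an eigenpair $T_{\mathrm{FGS}} v = \mu v$, rewrite it as $-L^{T} v = \mu(D+L)v$, pair with $v^{*}$, and set $p = v^{*}Dv > 0$, $\ell = v^{*}Lv$, $g = v^{*}G_k v > 0$; then $\mu = -\overline{\ell}/(p+\ell)$ and a one-line computation gives $|\mu|^{2} = |\ell|^{2}/(pg + |\ell|^{2}) < 1$.

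For the rate I would work in the $G_k$-norm. With $y = (D+L)^{-1}G_k e$ so that $T_{\mathrm{FGS}} e = e - y$, expanding $\|e - y\|_{G_k}^{2}$ and using $G_k e = (D+L)y$ yields the identity
\[
\|e\|_{G_k}^{2} - \|T_{\mathrm{FGS}} e\|_{G_k}^{2} = \langle (M + M^{T} - G_k)y, y\rangle = \langle D y, y\rangle \ge d_{\min}\|y\|_2^{2},
\]
where $d_{\min} = \min_i D_{ii}$ ($=1$ after scaling). Since $\|y\|_2 \ge \|G_k e\|_2/\|D+L\|_2$ and $\|G_k e\|_2^{2} \ge \lambda_{\min}(G_k)\|e\|_{G_k}^{2}$ (because $G_k^{2} - \lambda_{\min}(G_k)G_k \succeq 0$), it follows that
\[
\|T_{\mathrm{FGS}} e\|_{G_k}^{2} \le \Bigl(1 - \frac{d_{\min}\,\lambda_{\min}(G_k)}{\|D+L\|_2^{2}}\Bigr)\|e\|_{G_k}^{2},
\]
so $\rho_{\mathrm{FGS}} \le \|T_{\mathrm{FGS}}\|_{G_k} \le 1 - c\,\lambda_{\min}(G_k)/\|D+L\|_2^{2}$; combining this with the standard estimate $\|D+L\|_2 \lesssim \|G_k\|_2$ and $\lambda_{\min}(G_k) = \lambda_{\max}(G_k)/\kappa(G_k)$ gives the stated form $\rho_{\mathrm{FGS}} \lesssim 1 - c/\kappa(G_k)$, and for the Chebyshev case Theorem~\ref{thm:cheb} supplies the needed $\|L\|_2 \le \|L\|_F = O(\sqrt{s})$ and $\lambda_{\max}(G_k) = O(1+\log s)$ explicitly.

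The main obstacle is precisely this last step: controlling the lower-triangular block $\|D+L\|_2$ by $\|G_k\|_2$. Triangular truncation is not uniformly bounded in the spectral norm, so for a generic SPD matrix one only gets $\|L\|_2 = O(\log s)\,\|G_k\|_2$, degrading the bound to $\rho_{\mathrm{FGS}} \le 1 - c/(\kappa(G_k)\log^{2}s)$; and even with the sharp $\|L\|_2$ the crude energy estimate above loses a factor relative to the optimal symmetric-Gauss--Seidel rate $1 - 1/\kappa\bigl((D+L)D^{-1}(D+L)^{T}\bigr)^{-1}G_k)$. For the Chebyshev Gram matrices of interest this is resolved by the off-diagonal decay $|G_{ij}| = O(1/|i-j|)$ from Theorem~\ref{thm:cheb}, which keeps both $\|L\|_2$ and $\lambda_{\max}(G_k)$ under control; for the bare SPD statement one records the bound up to such benign factors, which is the content of ``standard condition-number bounds.''
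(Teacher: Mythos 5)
The paper offers no proof of this proposition at all: it is stated bare, with the phrase ``standard condition-number bounds'' doing all the work, and the surrounding text immediately concedes the bound is conservative. Your proposal therefore cannot match the paper's argument --- there is none --- but it is a correct and essentially complete instantiation of exactly the standard theory the paper is gesturing at. The Householder--John/Ostrowski--Reich criterion with $M+M^{T}-G_k = D \succ 0$ is the canonical proof that $\rho(T_{\mathrm{FGS}})<1$ for SPD $G_k$, and your eigenpair computation $|\mu|^{2} = |\ell|^{2}/(pg+|\ell|^{2})$ is the classical self-contained version of it; both are right. The energy-norm contraction identity $\|e\|_{G_k}^{2}-\|T_{\mathrm{FGS}}e\|_{G_k}^{2} = \langle Dy,y\rangle$ is also correct, as is the chain $\|y\|_2 \ge \|G_k e\|_2/\|D+L\|_2$ and $\|G_k e\|_2^{2}\ge \lambda_{\min}(G_k)\|e\|_{G_k}^{2}$. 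Most valuably, you identify the one step the paper silently skips: converting $\lambda_{\min}(G_k)/\|D+L\|_2^{2}$ into $1/\kappa(G_k)$ requires controlling the triangular truncation $\|L\|_2$ by $\|G_k\|_2$, which fails uniformly and costs at least a $\log^2 s$ factor in general. One quantitative caveat: in the Chebyshev case the inputs you cite ($\|L\|_F = O(\sqrt{s})$, $\lambda_{\max}(G_k)=O(1+\log s)$, $D=I$) actually yield $\rho_{\mathrm{FGS}} \lesssim 1 - c\,\lambda_{\min}(G_k)/s$, i.e.\ $1 - c\log s/(s\,\kappa(G_k))$, which is weaker than the stated $1-c/\kappa(G_k)$ by a factor of order $s/\log s$; this loss is inherent to the crude energy estimate (the sharp route goes through the symmetrized splitting $(D+L)D^{-1}(D+L)^{T}$). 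Given that the proposition is phrased with $\lesssim$ and the paper itself labels the bound conservative, your argument proves everything the statement can reasonably be read as claiming, and it is more honest about the hypotheses than the paper is.
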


For Chebyshev bases with $\kappa(G_k) = \mathcal{O}(s^2)$, such bounds may suggest
large $\nu$. However, experiments indicate that $\nu = 20$--$30$ sweeps are sufficient
in practice, owing to the conservativeness of the bounds and the self-correcting
behavior of CG.

\section{Numerical Experiments}
\label{results}

This section presents a set of very preliminar numerical experiments designed to assess the
practical performance of the proposed FGS-based Gram solvers within the
s-step CG and AMG frameworks. We consider large-scale 3D Poisson problems $-\Delta u = f$ on the unit cube $\Omega = [0,1]^3$ with homogeneous Dirichlet boundary conditions, discretized using a 27-point finite difference stencil yielding $\sim$12.2M degrees of freedom (DOFs) per GPU,
on AMD MI250X GPUs (64 GB High-Bandwidth-Memory 2e, 1.6 TB/s bandwidth per die) using MPI and HIP. We evaluate both algorithmic behavior, such as the
impact of the number of FGS sweeps $\nu$ and the block size $s$, and the
resulting weak scaling of the outer iterations. Chebyshev basis with adaptive parameters estimated via Lanczos (10 iterations compute extreme eigenvalues $\lambda_{\min}, \lambda_{\max}$ with 10\% safety margin) are used. The outer CG iteration is stopped when the relative residual satisfies
$\|r_{k}\| / \|r_{0}\| < 10^{-6}$. The Gram systems are solved using
$\nu \in \{6,15,30\}$ FGS sweeps, allowing us to assess how different levels
of inner accuracy affect overall convergence.
\begin{table}[h]
\centering
\caption{Weak scaling: outer iterations vs GPUs and $\nu$ for two values of $s$}
\small
\begin{tabular}{@{}lccccccc@{}}
\toprule
\multicolumn{8}{c}{$s = 10$}\\
\midrule
GPUs & 1 & 2 & 4 & 8 & 16 & 32 & 64\\
DOFs (M) & 12.2 & 24.3 & 48.7 & 97.3 & 194.7 & 389.3 & 778.7\\
\midrule
$\nu = 6$ & -- & -- & 32 & 34 & 36 & 62 & 136\\
$\nu = 15$ & 10 & 15 & 14 & 16 & 16 & 32 & 54\\
$\nu = 30$ & 8 & 13 & 12 & 15 & 17 & 19 & 33\\
\midrule
\multicolumn{8}{c}{$s = 20$}\\
\midrule
$\nu = 6$ & -- & -- & 21 & 29 & 34 & 51 & 96\\
$\nu = 15$ & 9 & 12 & 15 & 16 & 16 & 22 & 24\\
$\nu = 30$ & 8 & 9 & 9 & 12 & 13 & 17 & 17\\
\bottomrule
\end{tabular}
\end{table}

\textbf{Key observations.}
With $\nu=6$, iterations explode (136 at 64 GPUs for $s=10$). 
With $\nu=30$, weak scaling is restored: $s=20$ yields $2.1\times$ 
growth (8 to 17 iterations) across $64\times$ processor increase. The 32-to-64 GPU jump reflects reduced AMG preconditioner effectiveness at small subdomain sizes. 
Measured $\kappa(G)$ confirms ${\cal O}(s^2)$ scaling: 78 for $s=10$, 310 for $s=20$.

\textbf{AMG and performance.} We also evaluated a streaming variant of the
coarse-grid solve, where $A_{c}$ is never formed explicitly and FGS is applied
using on-the-fly products $A_{f}p_{j}$. With $\nu_{\text{coarse}} = 20$ at the
coarsest level ($n_{c} \approx 5000$), the streaming approach closely matches
the direct  solve, requiring 18 versus 17 V-cycle iterations ($<6\%$
difference). It also provides substantial memory savings: only 200 MB are
needed instead of 500 MB for storing the  $A_{c}$. Eliminating CPU--GPU
transfers and host-side factorization reduces V-cycle time by roughly 12\%.

\section{Conclusions and Future Work}
\label{concl}

We have shown that FGS is an effective inner solver for Gram systems in communication-avoiding Krylov methods. Its equivalence to MGS in the $A$-norm ensures accurate projection coefficients, and our inexact $s$-step CG analysis shows that convergence is maintained with only $20$--$30$ sweeps. The same ideas extend to AMG, where avoiding coarse-matrix formation reduces memory and runtime without sacrificing accuracy. Replacing the $O(s^3)$ Gram factorization with an $O(\nu s^2)$ FGS iteration enables larger block sizes, improves scalability, and lowers inner-solve cost, as demonstrated on 64 GPUs with modest memory usage.

Ongoing work evaluates this approach at large scale and on leadership-class systems.

\bibliographystyle{plain}

\begin{thebibliography}{9}

\bibitem{chronopoulos1989sstep}
A.~T.~Chronopoulos and C.~W.~Gear. s-step iterative methods for symmetric linear systems. {\em J. Comput. Appl. Math.}, 25:153--168, 1989.

\bibitem{chronopoulos1989pcg}
A.~T.~Chronopoulos and C.~W.~Gear.
On the efficient implementation of preconditioned s-step conjugate gradient methods on multiprocessors with memory hierarchy.
{\em Parallel Computing}, 11:37--53, 1989.

\bibitem{demmel2012communication}
J. Demmel, M. Hoemmen, M. Mohiyuddin, and K. Yelick. Avoiding communication in computing Krylov subspaces. {\em SIAM J. Sci. Comput.}, 35:A1520--A1548, 2013.

\bibitem{hoemmen2010communication}
M. Hoemmen. {\em Communication-avoiding Krylov Subspace Methods}. PhD thesis, UC Berkeley, 2010.

\bibitem{higham2002accuracy}
N. J. Higham. {\em Accuracy and Stability of Numerical Algorithms}. SIAM, 2nd ed., 2002.

\bibitem{ruhe1983stability}
A. Ruhe. Numerical aspects of Gram-Schmidt orthogonalization. {\em Linear Algebra Appl.}, 52:591--601, 1983.

\bibitem{vandeneshof2004inexact}
J. van den Eshof and G. L. G. Sleijpen. Inexact Krylov subspace methods. {\em SIAM J. Matrix Anal. Appl.}, 26:125--153, 2004.

\bibitem{ruge1987algebraic}
J. W. Ruge and K. St\"uben. Algebraic multigrid. In {\em Multigrid Methods}, pp. 73--130. SIAM, 1987.

\bibitem{philippe2012orthogonal}
B. Philippe and L. Reichel. On the generation of Krylov subspace bases. {\em Appl. Numer. Math.}, 62:1171--1186, 2012.

\bibitem{Gautschi1979}
W. Gautschi. The condition of polynomials in power form. {\em Math. Comp.}, 33:343--352, 1979.

\end{thebibliography}

\end{document}